\documentclass[11pt, reqno, english]{amsart}
\usepackage{accents}
\usepackage{style}
\usepackage[normalem]{ulem}
\usepackage{wasysym}

\renewcommand{\conv}{\operatorname{conv}}

\newcommand{\avg}{\operatorname{avg}}
\newcommand{\vor}{\operatorname{Vor}}

\begin{document}

\title{
An optimal Brouwer's fixed point theorem for discontinuous functions
}

\author[Adams]{Henry Adams}
\address[HA]{Department of Mathematics, University of Florida, Gainesville, FL 32611, USA}
\email{henry.adams@ufl.edu}

\author[Frick]{Florian Frick}
\address[FF]{Dept.\ Math.\ Sciences, Carnegie Mellon University, Pittsburgh, PA 15213, USA}
\email{frick@cmu.edu} 

\thanks{\hspace{-4mm}\textit{2020 Mathematics Subject Classification.} 54H25, 52A40. \\
HA was supported by the Simons Foundation Travel Support for Mathematicians.
FF was supported by NSF CAREER Grant DMS 2042428.}

\begin{abstract}
Brouwer's fixed point theorem states that any continuous function from a closed $n$-dimensional ball to itself has a fixed point.
In 1961, Klee showed that if such a function has discontinuities that are bounded, then it has a point that is close to being fixed.
We improve upon Klee's results in any finite-dimensional Euclidean space, and prove that our bounds are the best possible.
\end{abstract}

\maketitle


\section{Introduction}

Brouwer's fixed point theorem~\cite{brouwer1911abbildung} says that if $f$ is a continuous function from the closed unit ball to itself, then $f$ has a fixed point, i.e.\ a point $x$ with $f(x)=x$.
This theorem was used to prove the existence of Nash equilibria~\cite{nash1950equilibrium}, and~\cite{bjorner2017using} surveys a number of inviting applications and extensions of Brouwer's fixed point theorem.
What can be said if $f$ is not continuous?

Let $X$ be a metric space.
A function $f\colon X \to X$ is \emph{$\varepsilon$-continuous} if each point $x\in X$ admits a neighborhood $U_x$ such that the image $f(U_x)$ has diameter at most $\varepsilon$.
A point $x\in X$ is an \emph{$\varepsilon$-fixed point for $f$} if $\|x-f(x)\| \le \varepsilon$.
In~\cite[Theorem~3]{klee1961stability}, Klee shows that if $X$ is a convex compact subset of a normed linear space, and if $f\colon X \to X$ is $\varepsilon$-continuous, then for any $\varepsilon'>\varepsilon$ the function $f$ has an $\varepsilon'$-fixed point.

Closely related is the \emph{modulus of discontinuity}~$\delta(f)$ of~$f\colon X\to X$ defined as
\[
    \delta(f) = \inf\{\delta \ge 0 \ | \ \forall x \in X,\ \exists \ \text{neighborhood} \ U_x \ \text{of} \ x \ \text{such that} \ \diam(f(U_x)) \le \delta\};
\]
see for example~\cite{dubins1981equidiscontinuity} and the recent paper~\cite{adams2025quantifying}.
Observe that if $f$ is $\varepsilon$-continuous then $\delta(f) \le \varepsilon$, and conversely if $\delta(f) \le \varepsilon$ then $f$ is $\varepsilon'$-continuous for all $\varepsilon' > \varepsilon$.

We show how to improve upon Klee's result in any finite-dimensional Euclidean space, where moreover we prove that our bounds are optimal.
For $n\ge 1$, let $B^n=\{x \in \R^n~|~\|x\|\le 1\}$ be the unit ball in $n$-dimensional Euclidean space.
Let $R_n = \sqrt{2(n+1)/n}$ be the diameter of the inscribed $n$-simplex in $B^n$, measured using the Euclidean metric.

\begin{theorem-main}
If $f\colon B^n \to B^n$ is $\varepsilon$-continuous, then for any $\varepsilon' > \frac{\varepsilon}{R_n}$ the function $f$ has an $\varepsilon'$-fixed point.
\end{theorem-main}

Hence in $n$-dimensional Euclidean space $\R^n$, the Main Theorem improves upon~\cite[Theorem~3]{klee1961stability} by a factor $R_n$.
Note $R_1=2$, $R_2=\sqrt{3}$, $R_3=\sqrt{8/3}$, and $R_n$ decreases monotonically towards $\sqrt{2}$ as $n$ increases towards $\infty$.

The following result shows that the bound in our Main Theorem is optimal.
We can restrict attention to $0 < \varepsilon \le 2$ since any function $f\colon B^n \to B^n$ is $2$-continuous.

\begin{optimality-result}
Let $0 < \varepsilon \le 2$.
For any $\varepsilon' < \frac{\varepsilon}{R_n}$, there exists an $\varepsilon$-continuous function $f\colon B^n \to B^n$ that has no $\varepsilon'$-fixed point.
\end{optimality-result}

In Section~\ref{sec:dim1}, we give simple proofs of the Main Theorem and Optimality Result in $1$-dimensional space, i.e.\ when $n=1$.
We prove the Main Theorem for all dimensions in Section~\ref{sec:main}.
In Section~\ref{sec:optimality} we show the Optimality Result.

\section{The $1$-dimensional case}
\label{sec:dim1}

Note $B_1=[-1,1]$ and $R_1=2$.
We view $[-1,1]$ as a horizontal interval, increasing from left to right.
To show the Main Theorem in the particular case $n=1$, we must show that if $f\colon [-1,1] \to [-1,1]$ is $\varepsilon$-continuous, then for any $\varepsilon' > \frac{\varepsilon}{2}$ the function $f$ has an $\varepsilon'$-fixed point.
We prove the contrapositive.
Let $\varepsilon' > \frac{\varepsilon}{2}$.
We want to show that if no point $x$ satisfies $|x-f(x)|\le \varepsilon'$, then $f$ is not $\varepsilon$-continuous.
Since $-1$ is not an $\varepsilon'$-fixed point, $f$ must map $-1$ to some point further to the right, i.e.\ $f(-1)-(-1) > \varepsilon'$.
Similarly, since $1$ is not an $\varepsilon'$-fixed point, $f$ must map $1$ to some point further to the left, i.e.\ $1-(f(1)) < \varepsilon'$.
Decompose the interval $[-1,1]$ into two sets, $[-1,1]=S_R \coprod S_L$, where $S_R$ contains the points that $f$ maps to the right by more than $\varepsilon'$, and where $S_L$ contains the points that $f$ maps to the left by more than $\varepsilon'$:
\begin{align*}
S_R&=\{x \in [-1,1]~|~f(x)-x > \varepsilon'\} \\
S_L&=\{x \in [-1,1]~|~x-f(x) > \varepsilon'\}
\end{align*}
The sets $S_R \ni -1$ and $S_L \ni 1$ are nonempty and cover the connected interval $[-1,1]$, so their closures intersect.
Let $y$ be a point in this intersection of the closures.
Any open neighborhood $U_y$ about $y$ intersects both a point $y_R\in S_R$ arbitrarily close to $y$ and a point $y_L\in S_L$ arbitrarily close to $y$; for our purposes it suffices to assume that $|y-y_R|,|y-y_L|<\varepsilon'-\frac{\varepsilon}{2}$, so that $|y_L-y_R|<2\varepsilon'-\varepsilon$.
Therefore, $f(U_y)$ has diameter at least 
\begin{align*}
f(y_R)-f(y_L)
&= \left(f(y_R) - y_R\right) + \left(y_L - f(y_L)\right) - (y_L-y_R) \\
&> \varepsilon' + \varepsilon' - |y_L-y_R| \\
&> 2\varepsilon' - (2\varepsilon'-\varepsilon) \\
&= \varepsilon.
\end{align*}
So $f$ is not $\varepsilon$-continuous.

We now show the Optimality Result in the particular case $n=1$.
Let $0<\varepsilon\le 2$, and let $\varepsilon' < \frac{\varepsilon}{2}$.
Define the function $f\colon [-1,1] \to [-1,1]$ by
\[ f(x) = \begin{cases}
\tfrac{\varepsilon}{2} & \text{if } x \le 0 \\
-\tfrac{\varepsilon}{2} & \text{if } x > 0.
\end{cases} \]
To see that the function $f$ is $\varepsilon$-continuous, note that the diameter of the image of $f$ is $\varepsilon$.
The fact that the function $f$ has no $\varepsilon'$-fixed points follows since $\varepsilon' < \frac{\varepsilon}{2}$.

\section{Proof of the Main Theorem}
\label{sec:main}

Our proof of the Main Theorem for general dimensions $n$ will rely on two lemmas.
The first lemma, which is essentially Jung's Theorem, bounds the distance from a set of points in $\R^n$ to its convex hull.
For a subset $X\subseteq \R^n$ let $\conv(X)$ denote its convex hull.

\begin{lemma}[Jung's Theorem~\cite{Jung1910}]
\label{lem:Jung}
Let $X$ be a finite set of points in $\R^n$ with diameter at most $\varepsilon$.
Then any point in the convex hull $\conv(X)$ is within distance $\frac{\varepsilon}{R_n}$ from some point in $X$.
\end{lemma}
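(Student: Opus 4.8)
The plan is to give a short direct proof by a second-moment (variance-type) computation, after using Carath\'eodory's theorem to cut the number of points down to $n+1$. This is essentially the classical argument behind Jung's theorem, but I will run it so as to produce a \emph{point of $X$} near the given $p$, rather than the center of the minimal enclosing ball; it is cleanest to do the computation from scratch rather than quote the usual formulation.

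So fix $p \in \conv(X)$. First, by Carath\'eodory's theorem, since $X \subseteq \R^n$ we may write $p = \sum_{i=0}^{k} \lambda_i y_i$ with each $y_i \in X$, each $\lambda_i > 0$, $\sum_{i=0}^{k} \lambda_i = 1$, and $k \le n$ (discard any points carrying zero weight). Second, the crux is the identity
\[
  \sum_{i=0}^{k} \lambda_i \, \|p - y_i\|^2 \;=\; \tfrac12 \sum_{i,j} \lambda_i \lambda_j \, \|y_i - y_j\|^2 ,
\]
valid for any convex combination $p = \sum_i \lambda_i y_i$; one checks it by expanding the squared norms, since both sides reduce to $\sum_i \lambda_i \|y_i\|^2 - \|p\|^2$. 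Third, on the right-hand side the diagonal terms vanish and $\|y_i - y_j\| \le \diam(X) \le \varepsilon$ for $i \neq j$, so
\[
  \sum_{i=0}^{k} \lambda_i \, \|p - y_i\|^2 \;\le\; \frac{\varepsilon^2}{2} \sum_{i \neq j} \lambda_i \lambda_j \;=\; \frac{\varepsilon^2}{2}\Bigl(1 - \sum_{i=0}^{k} \lambda_i^2\Bigr) \;\le\; \frac{\varepsilon^2}{2}\Bigl(1 - \frac{1}{k+1}\Bigr) \;\le\; \frac{\varepsilon^2}{2}\cdot\frac{n}{n+1} \;=\; \frac{\varepsilon^2}{R_n^2},
\]
where the middle step is Cauchy--Schwarz ($\sum_i \lambda_i^2 \ge (k+1)^{-1}$ since $\sum_i \lambda_i = 1$), the next uses that $k/(k+1)$ is increasing with $k \le n$, and the last is just $R_n^2 = 2(n+1)/n$. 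Finally, a weighted average dominates its smallest entry, so $\min_{0 \le i \le k} \|p - y_i\|^2 \le \varepsilon^2/R_n^2$, and any minimizing $y_i$, which is a point of $X$, lies within $\varepsilon/R_n$ of $p$.

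I do not expect a serious obstacle: the only genuine inputs are Carath\'eodory's theorem — which is essential, since without capping the number of terms at $k+1 \le n+1$ the bound on $1 - \sum_i \lambda_i^2$ is too weak — together with the one-line identity above, and the rest is elementary inequalities. As a sanity check the constant is visibly optimal: taking $X$ to be the vertices of a regular $n$-simplex inscribed in $B^n$ and $p$ its barycenter turns every inequality above into an equality, with each $\|p - y_i\| = 1 = \varepsilon/R_n$.
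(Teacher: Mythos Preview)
Your proof is correct and takes a genuinely different route from the paper's. The paper's argument is topological: it first quotes the classical Jung theorem to produce a single center $y \in C = \conv(X)$ with $C \subseteq B_C[y;\varepsilon/R_n]$, observes that this forces all the closed balls $B_C[x;\varepsilon/R_n]$ with $x \in X$ to meet at~$y$, and then invokes the nerve lemma together with an induction on dimension (to cover $\partial C$) to conclude that $\bigcup_{x \in X} B_C[x;\varepsilon/R_n]$ is a contractible subset of~$C$ containing its boundary, hence all of~$C$. By contrast, your argument is entirely elementary and self-contained: Carath\'eodory reduces to at most $n+1$ points, the variance identity $\sum_i \lambda_i \|p-y_i\|^2 = \tfrac12\sum_{i,j}\lambda_i\lambda_j\|y_i-y_j\|^2$ converts the problem to bounding $1-\sum_i\lambda_i^2$, and Cauchy--Schwarz finishes it. What your approach buys is a short proof that avoids the nerve lemma, the induction on dimension, and even the need to quote the classical Jung theorem as a black box; it also makes the role of the constant $R_n$ and the extremal configuration (the regular simplex with $p$ at its barycenter) completely transparent. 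What the paper's approach buys is a conceptual link to covering arguments and nerve-type reasoning that aligns with the Vietoris--Rips machinery used elsewhere in the paper.
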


\begin{proof}
We may assume that $C\coloneqq \conv(X)$ is full-dimensional, by the monotonicity of the $R_n$'s.
For $y\in C$, define $B_C[y;r]=\{z\in C~|~d(z,y)\le r\}$.
Since $C \subseteq \R^n$ is a convex set of diameter at most~$\varepsilon$, Jung's Theorem~\cite{Jung1910} states that there is some point $y\in C$ satisfying $C \subseteq B_C[y;\frac{\varepsilon}{R_n}]$.
Thus all sets in the collection $\{B_C[x;\frac{\varepsilon}{R_n}]\}_{x\in X}$ have a common point, which implies that the nerve of this collection is contractible.
Hence $\bigcup_{x\in X} B_C[x; \frac{\varepsilon}{R_n}]$ is contractible by the nerve lemma.
By induction on dimension the boundary of $C$ is contained in $\bigcup_{x\in X} B_C[x; \frac{\varepsilon}{R_n}]$.
A contractible subset of a convex set that contains the boundary must be the entire convex set.
Thus $C \subseteq \bigcup_{x\in X} B_C[x; \frac{\varepsilon}{R_n}]$.
\end{proof}

The second lemma we use involves Vietoris--Rips simplicial complexes~\cite{Vietoris27}.
For $X$ a metric space and for $r\ge 0$, the \emph{Vietoris--Rips simplicial complex} $\vr{X}{r}$ has $X$ as its vertex set, and contains a finite subset $\sigma\subseteq X$ as a simplex if $\diam(\sigma)\le r$.

This second lemma is the same as~\cite[Lemma~7.4]{GH-BU-VR}, except with two minor changes.
First, we replace the modulus of discontinuity with $\varepsilon$-continuity, which we are allowed to do because if a function is $\varepsilon$-continuous, then its modulus of discontinuity is at most~$\varepsilon$.
Second, while in~\cite{GH-BU-VR} the existence of a parameter $\alpha$ is asserted, the proof there actually shows that the conclusion holds for every sufficiently small~$\alpha > 0$; this is the generality that we will require here.

\begin{lemma}
\label{lem:ind-map}
Let $f\colon X\to Y$ be an $\varepsilon$-continuous function between metric spaces, with $X$ compact.
Then for every $\gamma>0$ and every sufficiently small $\alpha>0$ the function~$f$ induces a (continuous) simplicial map $\overline{f}\colon \vr{X}{\alpha}\to \vr{Y}{\varepsilon + \gamma}$ defined by $\overline{f}(\{x_0,...,x_m\}) = \{f(x_0),...,f(x_m)\}$.
\end{lemma}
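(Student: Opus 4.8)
The plan is to fix $\gamma > 0$ and use the $\varepsilon$-continuity together with compactness to extract a uniform neighborhood scale $\alpha$ that controls the diameter blow-up under $f$. First I would record that, by definition of $\varepsilon$-continuity, every point $x \in X$ has an open neighborhood $U_x$ with $\diam(f(U_x)) \le \varepsilon$. Shrinking each $U_x$ to an open metric ball $B(x, \rho_x) \subseteq U_x$ of some radius $\rho_x > 0$, the collection $\{B(x, \rho_x/2)\}_{x \in X}$ is an open cover of the compact space $X$, so it admits a Lebesgue number $\lambda > 0$: every subset of $X$ of diameter at most $\lambda$ is contained in some $B(x, \rho_x/2)$, hence in some single $U_x$. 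I claim that any $0 < \alpha \le \lambda$ works, and in fact the bound on the image diameter is $\varepsilon$, so even $\gamma$ is not strictly needed for this step — but keeping $\gamma$ makes the statement robust and matches the cited source, so I would simply note that $\alpha \le \lambda$ suffices and $\varepsilon + \gamma \ge \varepsilon$.

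Next I would verify the simplicial map is well-defined. Let $\sigma = \{x_0, \dots, x_m\}$ be a simplex of $\vr{X}{\alpha}$, so $\diam(\sigma) \le \alpha \le \lambda$. By the Lebesgue number property, $\sigma \subseteq U_x$ for some $x \in X$, and therefore $\{f(x_0), \dots, f(x_m)\} \subseteq f(U_x)$, which has diameter at most $\varepsilon \le \varepsilon + \gamma$. Hence $\overline{f}(\sigma) = \{f(x_0), \dots, f(x_m)\}$ is a simplex of $\vr{Y}{\varepsilon + \gamma}$. Since the image of a simplex is a face of the image of any simplex containing it, $\overline{f}$ respects the face relation, so it is a genuine simplicial map, and a simplicial map between simplicial complexes is automatically continuous on geometric realizations.

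The only subtlety — and the place I would be most careful — is the passage from the pointwise neighborhoods $U_x$ to a single uniform scale $\alpha$; this is exactly where compactness of $X$ enters, via the Lebesgue number lemma applied to a suitably shrunk cover (one must pass to balls of half the radius so that a set of small diameter meeting such a ball lies inside the full ball). Everything else is bookkeeping. I would close by remarking that the same argument shows the conclusion holds for \emph{every} $\alpha$ with $0 < \alpha \le \lambda$, not merely for one value, which is the strengthening over~\cite[Lemma~7.4]{GH-BU-VR} that we need.
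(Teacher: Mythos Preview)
Your argument is correct: the Lebesgue-number step converts the pointwise neighborhoods from $\varepsilon$-continuity into a uniform scale~$\alpha$, and from there the verification that $\overline{f}$ is simplicial is routine. This is the standard proof, and it is almost certainly what the cited reference~\cite[Lemma~7.4]{GH-BU-VR} does; note that the paper itself does not give an independent proof of this lemma but simply defers to that reference, remarking only that (i) $\varepsilon$-continuity implies $\delta(f)\le\varepsilon$, and (ii) the proof there already yields the conclusion for \emph{all} sufficiently small~$\alpha$. Your write-up supplies exactly this argument.

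Two cosmetic points. First, the pass to half-radius balls is harmless but unnecessary: once you have the open cover $\{B(x,\rho_x)\}_{x\in X}$, the Lebesgue number lemma applies to it directly, and your parenthetical justification (``a set of small diameter meeting such a ball lies inside the full ball'') describes a hands-on proof of the Lebesgue lemma rather than something you need on top of it. Second, your observation that $\gamma$ is not actually used in the $\varepsilon$-continuous formulation is correct; the slack $\gamma$ is genuinely needed only in the modulus-of-discontinuity version from~\cite{GH-BU-VR}, where $\delta(f)\le\varepsilon$ guarantees only that for each $\delta>\varepsilon$ one can find the required neighborhoods.
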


We now prove the Main Theorem.

\begin{proof}[Proof of the Main Theorem]
Let $f\colon B^n \to B^n$ be $\varepsilon$-continuous.
We must show that for any $\varepsilon' > \frac{\varepsilon}{R_n}$ the function $f$ has an $\varepsilon'$-fixed point.
Let $0<\gamma<R_n\varepsilon'-\varepsilon$.
Let $\alpha>0$ satisfy Lemma~\ref{lem:ind-map} for $f\colon B^n \to B^n$.
If necessary, further decrease $\alpha>0$ to be sufficiently small so that $\frac{\varepsilon+\gamma}{R_n} + \frac{\alpha}{2} < \varepsilon'$.
Pick a subset $Z \subseteq B^n$ that is dense enough so that the balls of radius $\frac{\alpha}{2}$ centered at the points in $Z$ cover $B^n$.
Use a partition of unity on these balls to get a continuous map $\iota \colon B^n \to \vr{Z}{\alpha}$.
The $\varepsilon$-continuous restriction function $f|_Z \colon Z \to B^n$ induces a continuous map $\overline{f} \colon \vr{Z}{\alpha} \to \vr{B^n}{\varepsilon+\gamma}$ with $\overline{f}(\{z_0,...,z_m\}) = \{f(z_0),...,f(z_m)\}$, as in Lemma~\ref{lem:ind-map}.
Finally, define a continuous map $\avg \colon \vr{B^n}{\varepsilon+\gamma} \to B^n$ as follows.
A point in the geometric realization of $\vr{B^n}{\varepsilon+\gamma}$ can be written as a formal sum $\sum_{i=0}^m \lambda_i x_i$ with $\lambda_i>0$, with $\sum_i \lambda_i = 1$, with $x_i\in B^n$, and with $\diam(\{x_0,\ldots,x_m\}) \le \varepsilon+\gamma$.
Reinterpret the formal sum $\sum_{i=0}^m \lambda_i x_i$ in the geometric realization as a weighted average of vectors in Euclidean space $\sum_{i=0}^m \lambda_i x_i \in B^n$.
It follows from Lemma~\ref{lem:Jung} (Jung's Theorem) that if $\avg(\sum_{i=0}^m \lambda_i x_i)=x^*$, then some $x_i$ 
satisfies $\|x_i - x^*\| \le \frac{\varepsilon+\gamma}{R_n}$.

The composite map $F \colon B^n \to B^n$ defined as $F\coloneqq \avg \circ \overline{f} \circ \iota$ is continuous.
\[\begin{tikzcd}
B^n \arrow[r, "\iota"] \arrow[rrr, bend right, "F"] & \vr{Z}{\alpha}  \arrow[r, "\overline{f}"] & \vr{B^n}{\varepsilon+\gamma} \arrow[r, "\avg"] & B^n
\end{tikzcd}\]
Hence by Brouwer's fixed point theorem~\cite{brouwer1911abbildung} applied to the continuous function $F$, there is a fixed point $y \in B^n$ with $F(y)=y$.
Let $\{z_0,\ldots,z_m\}$ be the simplex in $\vr{Z}{\alpha}$ whose convex hull contains $\iota(y)$, which implies each $z_i$ is within distance $\frac{\alpha}{2}$ from $y$.
The image $\overline{f}(\{z_0,\ldots,z_m\})=\{f(z_0),...,f(z_m)\}$ is a simplex in $\vr{B^n}{\varepsilon+\gamma}$ containing $\overline{f}(\iota(y))$ in its convex hull.
Lemma~\ref{lem:Jung} gives $\|f(z_i) - \avg(\overline{f}(\iota(y)))\| \le \frac{\varepsilon+\gamma}{R_n}$ for some $0 \le i \le m$.
But $\avg(\overline{f}(\iota(y))) = F(y) = y$, so $\|f(z_i)-y\| \le \frac{\varepsilon+\gamma}{R_n}$.
And the point $y$ is within $\frac{\alpha}{2}$ of $z_i$, so by the triangle inequality $\|f(z_i)-z_i\| \le \frac{\varepsilon+\gamma}{R_n} + \frac{\alpha}{2} < \varepsilon'$.
Hence $z_i$ is an $\varepsilon'$-fixed point of $f$.
\end{proof}

\section{Proof of the Optimality Result}
\label{sec:optimality}

We show that the bound in our Main Theorem is optimal.

\begin{figure}[htb]
\centering
\includegraphics[width=2.5in]{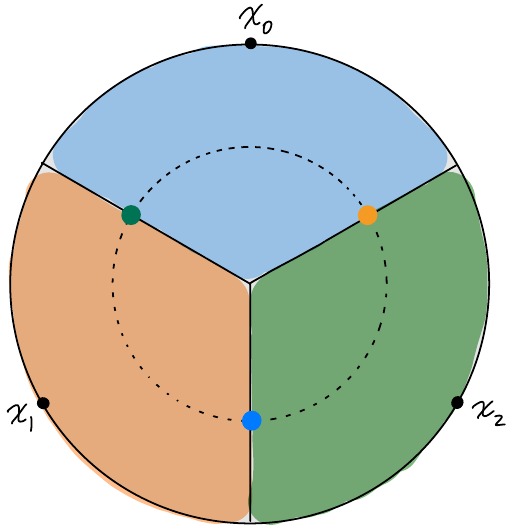}
\caption{
An illustration of the proof of the Optimality Result for $n=2$ (hence $R_2=\sqrt{3}$) and $\varepsilon=1$.
The outer circle is the boundary of $B^2$ and the dotted inner circle has radius $\frac{\varepsilon}{R_2}=\frac{1}{\sqrt{3}}$.
The interiors of the Voronoi regions $\vor(x_i)$ are drawn in blue, orange, and green for $i=0,1,2$, and the same color scheme is used for their corresponding image points $-\frac{\varepsilon}{R_n}\ x_i$ under $f$.
See~\cite[Theorem~5.4]{AAF},~\cite{308972},~\cite[Figure~7]{lim2023gromov},~\cite[Section~2.5]{balitskiy2021bounds},~\cite[Remark~6.10]{akopyan2012borsuk}, and~\cite{sitnikov1958rundheit} for related constructions.}
\end{figure}

\begin{proof}[Proof of Optimality Result]
Let $0 < \varepsilon \le 2$ and let $\varepsilon' < \frac{\varepsilon}{R_n}$.
We must construct an $\varepsilon$-continuous function $f\colon B^n \to B^n$ that has no $\varepsilon'$-fixed point.

Let $\{x_0,\ldots,x_n\}\subseteq B^n$ be the vertex set of a regular $n$-simplex inscribed in the sphere $S^{n-1} \subseteq B^n$.
Note $\diam(\{x_0,\ldots,x_{n+1}\})=R_n$.
Let
\[\vor(x_i) \coloneqq \left\{y \in B^n~:~\|y-x_i\| \le \|y-x_j\| \text{ for all }0\le j\le n \right\}\]
be the Voronoi region about point $x_i$.
The Voronoi regions $\{\vor(x_i)\}_{0\le i\le n}$ form a closed cover of $B^n$.
Therefore, we can choose subsets $V_i \subseteq \vor(x_i)$ for $0\le i\le n$ that form a \emph{partition} of $B^n$, i.e., a disjoint cover of $B^n$.
Define the function $f\colon B^n \to B^n$ by
\[ f(x) = -\frac{\varepsilon}{R_n}\ x_i \quad\quad \text{if } x \in V_i. \]
To see that the function $f$ is $\varepsilon$-continuous, note that the diameter of the image of $f$ is $\varepsilon$.
The fact that the function $f$ has no $\varepsilon'$-fixed points follows since $\varepsilon' < \frac{\varepsilon}{R_n}$.
\end{proof}







\bibliographystyle{plain}
\bibliography{discontinuousBrouwer}

\begin{thebibliography}{10}

\bibitem{AAF}
Micha{\l} Adamaszek, Henry Adams, and Florian Frick.
\newblock Metric reconstruction via optimal transport.
\newblock {\em SIAM Journal on Applied Algebra and Geometry}, 2(4):597--619,
  2018.

\bibitem{GH-BU-VR}
Henry Adams, Johnathan Bush, Nate Clause, Florian Frick, Mario G\'{o}mez,
  Michael Harrison, R.~Amzi Jeffs, Evgeniya Lagoda, Sunhyuk Lim, Facundo
  M\'{e}moli, Michael Moy, Nikola Sadovek, Matt Superdock, Daniel Vargas,
  Qingsong Wang, and Ling Zhou.
\newblock Gromov--{H}ausdorff distances, {B}orsuk--{U}lam theorems, and
  {V}ietoris--{R}ips complexes.
\newblock {\em arXiv preprint arXiv:2301.00246}, 2023.

\bibitem{adams2025quantifying}
Henry Adams, Florian Frick, Michael Harrison, Sunhyuk Lim, Nikola Sadovek, and
  Matt Superdock.
\newblock Quantifying discontinuity.
\newblock {\em arXiv preprint arXiv:2511.07636}, 2025.

\bibitem{akopyan2012borsuk}
Arseniy Akopyan, Roman Karasev, and Alexey Volovikov.
\newblock Borsuk--{U}lam type theorems for metric spaces.
\newblock {\em arXiv preprint arXiv:1209.1249}, 2012.

\bibitem{balitskiy2021bounds}
Alexey Balitskiy.
\newblock {\em Bounds on Urysohn width}.
\newblock PhD thesis, Massachusetts Institute of Technology, 2021.

\bibitem{bjorner2017using}
Anders Bj{\"o}rner, Ji{\v{r}}{\'\i} Matou{\v{s}}ek, and G{\"u}nter~M Ziegler.
\newblock Using {B}rouwer’s fixed point theorem.
\newblock {\em A Journey Through Discrete Mathematics: A Tribute to
  Ji{\v{r}}{\'\i} Matou{\v{s}}ek}, pages 221--271, 2017.

\bibitem{brouwer1911abbildung}
Luitzen Egbertus~Jan Brouwer.
\newblock {\"U}ber {A}bbildung von {M}annigfaltigkeiten.
\newblock {\em Mathematische Annalen}, 71(1):97--115, 1911.

\bibitem{dubins1981equidiscontinuity}
Lester Dubins and Gideon Schwarz.
\newblock Equidiscontinuity of {B}orsuk--{U}lam functions.
\newblock {\em Pacific Journal of Mathematics}, 95(1):51--59, 1981.

\bibitem{Jung1910}
Heinrich~W.E. Jung.
\newblock Über den kleinsten {K}reis, der eine ebene {F}igur einschließt.
\newblock {\em Journal für die reine und angewandte Mathematik}, 137:310--313,
  1910.

\bibitem{klee1961stability}
Victor Klee.
\newblock Stability of the fixed-point property.
\newblock In {\em Colloquium Mathematicum}, volume~1, pages 43--46, 1961.

\bibitem{lim2023gromov}
Sunhyuk Lim, Facundo M{\'e}moli, and Zane Smith.
\newblock The {G}romov--{H}ausdorff distance between spheres.
\newblock {\em Geometry \& Topology}, 27:3733--3800, 2023.

\bibitem{nash1950equilibrium}
John~F Nash~Jr.
\newblock Equilibrium points in $n$-person games.
\newblock {\em Proceedings of the National Academy of Sciences}, 36(1):48--49,
  1950.

\bibitem{308972}
Rozu.
\newblock {G}romov--{H}ausdorff distance between a disk and a circle.
\newblock MathOverflow.
\newblock \url{https://mathoverflow.net/q/308972}.

\bibitem{sitnikov1958rundheit}
Kirill Sitnikov.
\newblock {\"U}ber die {R}undheit der {K}ugel.
\newblock {\em Nachr.\ Akad.\ Wiss.\ G{\"o}ttingen, Math.-Physik Kl.\ IIa},
  9:213--215, 1958.

\bibitem{Vietoris27}
Leopold Vietoris.
\newblock {{\"U}ber den h{\"o}heren Zusammenhang kompakter R{\"a}ume und eine
  Klasse von zusammenhangstreuen Abbildungen}.
\newblock {\em Mathematische Annalen}, 97(1):454--472, 1927.

\end{thebibliography}

\end{document}